\newtheorem{theorem}{Theorem}[section]
\newtheorem{lemma}[theorem]{Lemma}
\newtheorem{corollary}[theorem]{Corollary}
\newtheorem{proposition}[theorem]{Proposition}
\theoremstyle{definition}
\newtheorem{remark}[theorem]{Remark}
\numberwithin{equation}{section}
\begin{document}

\title{A criterion for normality of analytic mappings}

\begin{abstract}
In this  paper we give a generalization and improvement of the Pavlovi\'{c} result on the characterization of continuously differentiable functions
in the Bloch space on  the unit ball in $\mathbb{R}^m$. Then we derive a Holland--Walsh type theorem for analytic  normal mappings on the unit disk.
\end{abstract}

\author{Marijan Markovi\'{c}}

\address{
Faculty of Sciences and Mathematics\endgraf
University of Montenegro\endgraf
D\v{z}ord\v{z}a Va\v{s}ingtona bb\endgraf
81000 Podgorica\endgraf
Montenegro\endgraf
}
\email{marijanmmarkovic@gmail.com}

\keywords{normal analytic function, Bloch analytic function, hyperbolic distance, spherical distance, Holland--Walsh type characterisation, Bloch type
spaces, Lipschitz type  spaces}

\subjclass[2010]{Primary 30D45; Secondary 30H30, 32A18}

\maketitle

\section{Introduction}

In the eighties Holland and Walsh \cite{HOLLAND.WALSH.MATH.ANN}  published an interesting result on the characterisation of analytic Bloch functions on  the
unit disc which involves the expression $ |f(z) - f(w)|/|z-w|$   multiplied  with an  appropriate  weight function  depending on the both  variables $z$ and
$w$. More precisely,         they obtained  that an analytic function $f(z)$ belongs to the  Bloch space  on the unit disc  $\mathbb{U}$  if and only if the
expression
\begin{equation*}
\sqrt {1-|z|^2}  \sqrt{1-|w|^2} \frac {|f(z) - f(w)|}{|z-w|}
\end{equation*}
is bounded  for $z,\, w\in \mathbb{U}$, $z\ne  w$.

More  recently the  same  characterisation was given for  analytic Bloch functions in the unit ball of $\mathbb{C}^m$ by Ren and Tu \cite{REN.PAMS}.  After
that  Ren and K\"{a}hler \cite{REN.PEMS}  proved  this characterisation for harmonic functions on  the unit ball in $\mathbb{R}^m$.

In 2008,        Pavlovi\'{c} \cite{PAVLOVIC.PEMS}  proved that even  continuously differentiable Bloch functions  obey the same  characterisation. Actually,
Pavlovi\'{c}  proved more  in the following  proposition.

\begin{proposition}[Cf. \cite{PAVLOVIC.PEMS}]\label{PR.PAVLOVIC}
A continuously differentiable  complex-valued  function $f$ on  the unit ball in  $\mathbb{R}^m$  is a Bloch  function, i.e.,
\begin{equation*}
\sup_{|x|<1}  (1-|x|^2) \|D _ f (x)\|
\end{equation*}
is finite, if and only if the following quantity if finite
\begin{equation*}
\sup_{|x|,\,  |y|<1,\, x\ne y }\sqrt { 1-|x|^2 }\sqrt{  1-|y |^2  }  \frac{|f(x) - f (y)|}{| x - y|}.
\end{equation*}
Moreover, the above  numbers are equal.
\end{proposition}

Therefore, by this   proposition the Bloch semi--norm
\begin{equation*}
\|f\|  =  \sup_{|x|<1}  (1-|x|^2) \|D _ f (x)\|.
\end{equation*}
of a continuously differentiable (complex--valued) function $f$ on  the unit ball in $\mathbb{R}^m$  may be expressed in the   differential--free way
\begin{equation*}
\|f\| = \sup_{|x|,\, |y|<1,\, x\ne y}  \sqrt { 1-|x|^2 }\sqrt{  1-|y |^2}  \frac{|f(x)-f (y)|}{|x-y|}.
\end{equation*}

For an analytic  function $f$ on $\mathbb{U}$ we  have  $|f'(z)| = \|D_f(z)\|$ (for the right side we consider  $D_f(z)$  as a linear   mapping   from
$\mathbb{R}^2$ into  $\mathbb  {R}^2$), so from  the Pavlovi\'{c} result we can recover the  characterization of analytic functions in the Bloch space
on the unit  disc obtained by Holland  and Walsh \cite[Theorem 3]{HOLLAND.WALSH.MATH.ANN} as well as the Ren and Tu results.

Let  $\mathrm {Aut} (\mathbb{U})$  be the group of all conformal  transforms of the unit disk onto itself. An  analytic  function $f$ on the unit disk
is  Bloch if and only if  $\{(f\circ\varphi)(z) - (f\circ\varphi)(0):\varphi\in \mathrm {Aut} (\mathbb{U})\}$ is a normal family \cite{COLONNA.PALERMO}.

One says that  an analytic function $f$ is normal on $\mathbb{U}$ if $\{(f\circ\varphi)(z):\varphi\in\mathrm {Aut}(\mathbb {U})\}$  is a normal family.
It is well known that an analytic function $f$ on the unit disc is normal  if  and only  if  it  satisfies  the growth  condition
\begin{equation*}
\frac {|f'(z)|}{1+|f(z)|^2}\le \frac C {1-|z|^2},\quad z\in \mathbb{U}
\end{equation*}
for a constant $C>0$.

The main aim of this article is to obtain   a new criterion for normality of analytic functions on $\mathbb{U}$. This  criterion is stated in the
proposition which  follows. A proof of this proposition follows from the characterisation result  (given in  the main  lemma in the next section)
similar to Proposition \ref{PR.PAVLOVIC}  for continuously differentiable mappings that satisfy a certain  growth condition.

\begin{proposition}
Let $f$ be an analytic function on the disk $\mathbb{U}$.  The function $f$ is  Bloch  if and only  if
\begin{equation*}
{\sqrt{1-|z|^2} \sqrt{1-|w|^2}} \frac {|f(z) -  f(w)|}{|z-w|}
\end{equation*}
is bounded as a function of  $z,\, w\in \mathbb{U} $  for $z\ne w$.

The function $f$  is normal if and only if
\begin{equation*}
 \frac{\sqrt{1-|z|^2} \sqrt{1-|w|^2}}{\sqrt{1+|f(z)|^2} \sqrt{1+|f(w)|^2}} \frac {|f(z) -  f(w)|}{|z - w|}
\end{equation*}
is  bounded as a function of     $z,\, w\in \mathbb{U}  $  for $z\ne w$.
\end{proposition}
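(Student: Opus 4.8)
The plan is to reduce both equivalences to results already established above, the only genuinely analytic ingredient being the identity $\|Df(z)\| = |f'(z)|$, valid for every holomorphic $f$: the Fr\'{e}chet differential of an analytic map acts as multiplication by the complex number $f'(z)$, whose operator norm on $\mathbf{C}\cong\mathbf{R}^2$ is exactly $|f'(z)|$. Once this identity is in place, each assertion becomes a concatenation of a classical growth description of the relevant function class (recalled in the paragraph preceding the statement) with one of the preceding theorems of this paper.

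For the Bloch part, I would first record that, by the classical characterisation, $f$ is Bloch precisely when $(1-|z|^2)|f'(z)|$ is bounded on $\mathbf{U}$, that is, when the Bloch norm $\|f\|_b = \sup_{z}(1-|z|^2)\|Df(z)\|$ is finite. I then apply the Pavlovi\'{c} result for normed spaces in the special case $X = Y = \mathbf{C}$ regarded as $\mathbf{R}^2$ (equivalently, the direct $\mathbf{R}^m$ computation carried out above with $m=2$), which yields
\begin{equation*}
\|f\|_b = \sup_{z\ne w}\sqrt{1-|z|^2}\,\sqrt{1-|w|^2}\,\frac{|f(z)-f(w)|}{|z-w|}.
\end{equation*}
Hence $\|f\|_b<\infty$ holds if and only if the first displayed two-variable expression is bounded, which is exactly the claim.

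For the normal part, I would invoke the Theorem characterising the normal growth condition, taken with the hyperbolic weight $\omega(x)=\frac{1}{1-|x|^2}$ on $\mathbf{U}=B_{\mathbf{R}^2}$ and the spherical weight $\tilde{\omega}(z)=\frac{1}{1+|z|^2}$ on $\mathbf{R}^2$. Rewriting $\|Df(z)\|=|f'(z)|$, the growth condition $\frac{1}{1-|z|^2}\|Df(z)\|\le M\frac{1}{1+|f(z)|^2}$ of that theorem turns into $\frac{|f'(z)|}{1+|f(z)|^2}\le M\frac{1}{1-|z|^2}$, which is the standard criterion for $f$ to be a normal function (the spherical derivative weighted by $1-|z|^2$ is bounded). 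By the cited Theorem this growth condition is equivalent to the two-variable inequality carried by the admissible function $W_f(z,w)=\frac{\sqrt{1-|z|^2}\sqrt{1-|w|^2}}{\sqrt{1+|f(z)|^2}\sqrt{1+|f(w)|^2}}$, and finiteness of the constant $M$ there is precisely the boundedness of the second displayed expression.

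The step requiring the most care is the translation between the two growth conditions and membership in the Bloch, respectively normal, class: one must use that an analytic $f$ is Bloch if and only if $|f'(z)|\le C\frac{1}{1-|z|^2}$, and normal if and only if $\frac{|f'(z)|}{1+|f(z)|^2}\le C\frac{1}{1-|z|^2}$, as recalled just before the statement. Granting these classical equivalences and the identity $\|Df\|=|f'|$, no further estimation is needed, so I expect no genuine obstacle beyond this bookkeeping; the whole argument is the combination of that identity with the Pavlovi\'{c} Proposition and the normal-function Theorem already proved.
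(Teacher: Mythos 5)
Your proposal is correct and follows essentially the same route as the paper: the paper also obtains the Bloch part by specialising the Pavlovi\'{c}-type result (via $\|Df(z)\|=|f'(z)|$ for analytic $f$) and the normal part from the theorem with hyperbolic weight $\omega(x)=\frac{1}{1-|x|^2}$ and spherical weight $\tilde{\omega}(z)=\frac{1}{1+|z|^2}$, combined with the classical growth characterisations of Bloch and normal functions recalled just before the statement. No gaps; the bookkeeping you describe is exactly the paper's implicit argument.
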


\section{The main lemma}

We will  introduce here  the needed notation and terminology.

Let  $\mathbb{B}^m$ be  the  unit  ball in $\mathbb{R}^m$.

For a differentiable mapping $f:D\to\mathbb{R}^n$, where $D\subseteq\mathbb{R}^m$ is a domain, we denote by $D_f(x)$ its differential at $x \in D$, and by
\begin{equation*}
\|D_f (x)\|  = \sup _{\zeta\in \partial \mathbb{B}^{m}} |D _ f (x)\zeta|
\end{equation*}
the norm of the linear operator $D_f(x):\mathbb{R}^m\to\mathbb{R}^n$.
The class of all continuously differentiable mappings $f:D\to\tilde{D}$  is denoted by $\mathcal{C}^1 (D,\tilde{D})$.

A weight function is an everywhere positive and continuous function on a domain in $\mathbb{R}^m$.        If  $\omega$ is a weight function on a domain
$D\subseteq  \mathbb{R}^m$, the  $\omega$-distance between $x\in D$ and $y\in D$ is given by
\begin{equation*}
d_\omega  (x,y)  =  \inf_\gamma \int_\gamma {\omega(z)}  {|dz|},
\end{equation*}
where $\gamma\subseteq D$   is  among  all piecewise $\mathcal{C}^1$-curves connecting $x$  and $y$.

Let $\omega$   and  $\tilde{\omega}$  be weight functions on domains  $D\subseteq \mathbb{R}^m$ and $\tilde{D}\subseteq \mathbb{R}^n$, respectively.  We
will consider mappings  $f\in \mathcal {C}^1 (D,\tilde{D})$  which  satisfy  the   Bloch type  growth  condition, i.e., the   growth condition  of   the
type
\begin{equation*}
 {\tilde{\omega}  (f( x ))}  \|D_ f(x)\|\le C{\omega(x)},  \quad  x\in  D,
\end{equation*}
where $C$ is a positive   constant. For  such   mappings we  introduce
\begin{equation*}
\mathbf {B}_f  =  \sup_{x\in D} \frac  {\tilde{\omega} (f(x)) }  {\omega(x)}\|D_ f(x)\|,
\end{equation*}
which will be called the Bloch number  of the mapping $f$. We denote by $\mathcal {B}_{\omega,\tilde{\omega}}$ the class  of  all mappings
$f\in \mathcal {C}^1(D,\tilde{D})$ for which the Bloch  number  $\mathbf{B}_f$ is finite.

Note that for $\tilde{D} = \mathbb{R}^n$ and $\tilde{\omega}\equiv 1$  the Bloch number $\mathbf{B}_f$ has the semi--norm properties. Moreover, the class
$\mathcal {B}_{\omega,\tilde{\omega}}$  has  the  linear   space  structure.

The main  aim of  this section   is to  obtain a  differential--free  description of the class  $\mathcal {B}_{\omega,\tilde{\omega}}$ and  the
differential--free expression  for  the    Bloch  number of a continuously  differentiable  mapping. In order to do that,  we will  consider mappings
$f\in \mathcal {C}^1 (D,\tilde{D})$  which    satisfy the  Lipshitz  type growth condition, i.e.,
\begin{equation*}
|f(x) - f(y) | \le C_f(x,y)    |x -  y|,
\end{equation*}
where $C_f(x,y)$ is a  positive function.

For given weight functions  $\omega$  on  $D$  and $\tilde{\omega}$ on $\tilde{D}$, and a mapping $f\in \mathcal {C}^1(D,\tilde{D})$ we  introduce an
everywhere  positive function $\Omega_f $ on  $D\times  D$  such that  the  following  conditions are satisfied:
\begin{equation*}
{\Omega}_f  (x,y)  =  {\Omega}_f (y,x),\quad
{\Omega}_f(x,x) = \frac{\tilde{\omega} (f(x))} {\omega(x)},\quad
\liminf_{z\to x}  {\Omega}_f(x,z)\ge  {\Omega}(x,x),
\end{equation*}
and
\begin{equation*}
{\Omega }_f(x, y ) \frac{ |f(x) -  f(y) | }{ |x - y | } \le \frac {d_{\tilde{\omega}}(f(x),f(y))}{d_\omega(x,y)}, \quad x,\, y\in D,\,   x\ne y.
\end{equation*}
We  say that ${\Omega}_f$ is an admissible  function   for the mapping  $f$ with respect to  the given weight  functions $\omega$  and $\tilde{\omega}$.

Note that if ${\Omega}_f $ is not symmetric,  but satisfies all other  conditions   stated above, we  can replace   it  by  the  symmetric function
\begin{equation*}
\tilde{\Omega}_f(x,y)= \max\{{\Omega}_f(x,y), \Omega_f(y,x)\},\quad x,\, y\in D.
\end{equation*}
This new function will be   admissible for the same mapping,   as it is easy to  check.

Also  note that if we take  $\tilde{\omega}\equiv 1$  on  the domain $\tilde{\Omega}$, then the distance $d_{\tilde{\omega}}$ is equal to the Euclidean
distance. In this case the fourth condition of admissibility  is  independent  of  the  mapping  $f$, and  it reduces on finding an  universal
admissible function ${\Omega} (x, y ) $ which satisfies the simplified condition
\begin{equation*}
{\Omega}(x,y) {d_\omega(x,y)}\le |x-y|,\quad x,\, y\in D.
\end{equation*}

Of course, the admissible function need not be unique, and one may pose the existence question. In the remark given  below we solve the existence question
in the general  setting.

Introduce  now  the  following    quantity
\begin{equation*}
\mathbf{L}_f   = \sup _{x,\, y\in D,\, x\ne y}  {\Omega }_f(x,y)\frac {  | f(x)  -  f(y)| }{  |x - y | },
\end{equation*}
where $\Omega_f $ is an admissible function for the mapping $f$ with respect to $\omega $ and $\tilde{\omega}$.  We call it the Lipschitz number of $f$.
The main lemma  stated below says  that the Lipschitz number  does not depend on the choice of an  admissible  function  (therefore   the definition of
$\mathbf{L}_f$  is correct).

The class of all mappings  $f\in \mathcal {C}^1 (D,\tilde{D})$ for which the  Lipschitz number $\mathbf{L}_f$ is  finite is denoted by
$\mathcal {L}_{\omega,\tilde{\omega}}$. If  $\tilde{D}  = \mathbb{R}^n$  and $\tilde{\omega}=1$  then
$\mathbf{L}_f$ also   has the semi--norm properties, and  $\mathcal {L}_{\omega,\tilde{\omega}}$ is a  linear   space.

Now  we prove our  main Lemma \ref{LE.MAIN} which connects the Bloch and Lipshitz number of a continuously differentiable  mapping between Euclidean
domains. Our main  lemma  shows that  any   mapping $f\in \mathcal {C}^{1} (D, \tilde{D})$ satisfies
\begin{equation*}
\mathbf{B}_f =\mathbf{L}_f.
\end{equation*}

As a consequence we have  that the  Lipschitz number is independent of the choice of the admissible function $\Omega_f$, and the  Bloch  number   may   be
expressed in the differential--free way
\begin{equation*}
\mathbf{B} _ f = \sup_{x,\, y\in D,\, x\ne y }   {\Omega }_f(x,y)    \frac {  | f(x)  -  f(y)| }{  |x - y | },
\end{equation*}
where $\Omega_f$ is an  admissible function  for $f$.

As another consequence we have the  coincidence of the two classes  of  mappings in  $\mathcal {C}^1(D,\tilde{D})$, i.e.,
$\mathcal {B}_ {\omega,\tilde{\omega}} = \mathcal {L} _{\omega,\tilde{\omega}} $. Thus, the Bloch  class $\mathcal {B}_ {\omega,\tilde{\omega}}$
may be described as
\begin{equation*}
\left \{ f \in C^1 (D,\tilde{D}): \sup _{x,\, y\in D,\, x\ne y}{\Omega }_f (x,y) \frac{ | f(x)- f(y) | }{|x - y |}<\infty\right\}.
\end{equation*}

All the results and facts stated above follow from   the content of the  following lemma.

\begin{lemma}\label{LE.MAIN}
Let $(D,d_\omega)$ and $(\tilde{D},d_{\tilde{\omega}})$ be domains in $\mathbb{R}^m$ and $\mathbb{R}^n$ with distances  $d_\omega$ and $d_{\tilde{\omega}}$
generated by the weight functions $\omega$ and $\tilde{\omega}$  in $D$ and $\tilde{D}$, respectively. Let  $f\in C^1 (D,\tilde{D})$,  and let ${\Omega}_f$
be any admissible function for the mapping $f$ with  respect to $\omega$ and $\tilde{\omega}$.
If one of the numbers   $\mathbf{B}_f$   and  $\mathbf{L}_f$  is  finite, then  both  numbers are finite, and these numbers are equal.
\end{lemma}

\begin{proof}
For one direction,  assume that the Lipschitz number  of  $f\in \mathcal {C}^1(D,\tilde{D}) $, i.e., that the  quantity
\begin{equation*}
\mathbf  {L}_f  =  \sup _{ x\ne y }   {\Omega }_f(x,y)  \frac {  | f(x)  -  f(y)| }{  |x - y | }
\end{equation*}
is finite, where $\Omega_f$ is an admissible function  for $f$ with respect to $\omega$ and $\tilde{\omega}$. We  are going to show that
$\mathbf  {B}_f \le \mathbf {L}_f$,     which  implies that the Bloch number  $\mathbf {B}_f$ must   also be  finite.

Let $x \in \Omega$.  If we have  in mind that
\begin{equation*}
\limsup_{y\to x} \frac{|f(x) - f(y)|}{|x - y |} = \|D _ f(x)\|,
\end{equation*}
we obtain
\begin{equation*}\begin{split}
\mathbf{L}_f&  =  \sup _{  y\ne  z }   {\Omega }_f  (y,z) \frac { |f(y - f(z) |}{| y - z|}
\ge \limsup_{z \rightarrow x} {\Omega}_f(x,z)\frac {  | f(x) -  f(z) |}{ |x - z |}
\\& \ge \liminf_{z\rightarrow x} {\Omega}_f(x,z) \limsup_{z\rightarrow x}\frac {  |f(x)  -  f(z) |}{|x - z |}
\ge {\Omega}_f (x,x) \|D_f(x) \| \\&= \frac {\tilde { \omega}(f(x))}{\omega(x)} \|D_f(x) \|.
\end{split}\end{equation*}
We have used the   fact that
\begin{equation*}
\limsup _{y\rightarrow  x } A (y) B (y)   \ge  \liminf_{y\rightarrow x}   A (y)  \limsup_{y\rightarrow  x } B(y)
\end{equation*}
for non--negative   functions  $A $  and       $B $  on  an Euclidean domain.

It follows that
\begin{equation*}
\mathbf{L}_f     \ge\sup_{x\in  D }  \frac {\tilde { \omega}(f(x))}{\omega(x)}  \| D_f(x) \| = \mathbf{B}_f,
\end{equation*}
which we aimed to prove.

Assume now that the  Bloch number    $\mathbf{B}_f$ of a continuously differentiable mapping  $f: D \to\tilde{D}$  is finite. We  will prove the reverse
inequality $\mathbf{L}_f\le\mathbf {B}_f$,  which in particular implies that the  Lipschitz  number  $\mathbf {L}_f$  is also  finite.

Let $\gamma\subseteq D$ be  any piecewise  $\mathcal {C}^1$-curve connecting $x\in D$ and $y\in D$, i.e., such that $\gamma (0)= x$  and $\gamma(1) = y$.
Since  $f\in \mathcal {C}^1(D,\tilde {D})$, the curve  $\delta  =    f\circ\gamma \subseteq \tilde{\Omega}$ (which connects  $f(x)$ and $f(y)$), is also
piecewise  $\mathcal {C}^1$ in the domain $\tilde{D}$  and we have
\begin{equation*}\begin{split}
d_ {\tilde {\omega  }} (  f(x)  ,  f(y) )  & \le
\int_0^1 \tilde{\omega}  (\delta (t)) |\delta'(t) |dt
=  \int_0^1 \tilde{\omega} (f\circ \gamma (t))  | D _f ( \gamma (t) ) \gamma'(t)| dt
\\& \le\int_0^1\tilde{\omega} (f\circ  \gamma (t))   \| D_f ( \gamma (t) )\|   |\gamma'(t) | dt
 \le B _ f   \int_0^1  \omega  (\gamma (t))  |\gamma'(t)|dt
\\&= \mathbf {B}_f d_\omega (x,y).
\end{split}\end{equation*}
If   we now  take the  infimum over all curves  $\gamma$ we obtain
\begin{equation*}
\frac {d_{\tilde{\omega}} (f(x),  f(y))}{  d_{\omega} (x,y)}\le \mathbf{B}_f
\end{equation*}
for every  $x\in D$  and $y\in D$ such that $x\ne y$.  Applying now   conditions    posed on   the  admissible function ${\Omega}_f$,   we obtain
\begin{equation*}
{\Omega}_f (x,y)  \frac {|  f(x) -   f(y)|}{|x -y|} \le  \frac {d_{\tilde{\omega}} (f(x),  f(y))}{ d_{\omega} (x,y)}\le \mathbf{B}_f.
\end{equation*}
It follows that
\begin{equation*}
\mathbf{L}_f  = \sup_{x,\, y\in D,\, x\ne y}  {\Omega}_f(x,y)    \frac { |f(x)  -  f(y)| }{ |x-y|} \le  \mathbf{B}_f,
\end{equation*}
which we aimed  to prove.
\end{proof}

\begin{remark}\label{RE.OMEGA}
Let us first note that if $D\subseteq \mathbb{R}^m$ is  a  domain, and $\omega$ a weight function on $D$, then for the  $\omega$-distance $d_\omega$ on
$\Omega$, we have
\begin{equation*}
\lim _{y\to x}\frac { d_\omega(x,y)} {|x-y|}  = \omega (x), \quad x\in  \Omega.
\end{equation*}

Indeed, since  $\omega $ is continuous, there exists an open ball $B(x,r)\subseteq \Omega$ such that
\begin{equation*}
0<\omega(x)-\varepsilon<\omega (y)<\omega(x)+\varepsilon,\quad y\in B (x,r),
\end{equation*}
where $\varepsilon> 0$  is  a  sufficiently small   number. Now,  we have
\begin{equation*}
d_\omega (x,y)  \le \int_{[x,y]}  \omega  \le   (\omega (x) +\varepsilon )  |x -  y | =  (\omega (x) +\varepsilon ) |x-y|.
\end{equation*}
On the other hand, if $\gamma\subseteq \Omega$ is among curves  that connect   $x$  and $y$, then
\begin{equation*}
d_{\omega} (x,y) = \inf_\gamma \int_\gamma \omega\ge (\omega (x) - \varepsilon)|x-y|.
\end{equation*}
Therefore,
\begin{equation*}
\omega(x) - \varepsilon  \le \frac{d_\omega (x,y)}{|x-y|} \le \omega(x) + \varepsilon,\quad    y\in B (x,r).
\end{equation*}
This means that
\begin{equation*}
\lim _{y\to  x}\frac { d_\omega(x,y)} {d (x,y)} = \omega (x).
\end{equation*}

Let us now solve the existence question concerning the   admissible function. Let $f\in \mathcal {C}^1(D,\tilde{D})$ satisfies the condition
$\tilde{\omega}(f(x))  \|D_f (x)\|\le C \omega (x)$, $x\in D$. Then
\begin{equation*}
\Omega _ f(x,y) =
\begin{cases}
\frac{ d_{\tilde{\omega}}(f(x),f(y))}{|f(x)- f(y) |}/
\frac {d_\omega(x,y)}{|x -y |}, & \mbox{if}\ x\ne y,f(x)\ne f(y); \\
 \tilde{\omega} (f(x)) /   \frac {d_\omega(x,y)}{| x -y |}, & \mbox{if}\ x\ne y, f(x)=f(y) ;\\
{\tilde{\omega} (f(x))} /{\omega(x)},  & \mbox{if}\ x=y,f(x)=f(y).
\end{cases}
\end{equation*}
is an admissible function for $f$. Having in mind the preceding remark  it follows
\begin{equation*}
\liminf_{y\to x} \Omega_f (x,y) = \lim_{y\to x} \Omega_f (x,y) =\frac{\tilde{\omega} (f(x))} {\omega(x)} =  \Omega _f(x,x).
\end{equation*}
Other           three admissability    conditions for $\Omega_f$ are obviously satisfied.
\end{remark}

In view of Remark \ref{RE.OMEGA}  we  have the following expected corollary.

\begin{corollary}
Let $(D,d_\omega)$ and $(\tilde{D},d_{\tilde{\omega}})$ be domains  in $\mathbb{R}^m$ and $\mathbb{R}^n$ with distances $d_\omega$ and  $d_{\tilde{\omega}}$
generated by the weight functions $\omega$ and $\tilde{\omega}$  in $D$ and  $\tilde{D}$, respectively.
Then $f$ satisfies the  inequality
\begin{equation*}
\tilde {\omega} (f(x)) \|D_f(\zeta)\| \le C \omega (x),
\end{equation*}
where $C$ is  a positive  constant,  if and only if there holds
\begin{equation*}
d_{\tilde {\omega} } (f(x),f(y))\le C  d_\omega (x,y)
\end{equation*}
for the same constant $C$.
\end{corollary}

For example, the result of the last corollary is  proved for  harmonic mappings of the unit disc into itself by  Colonna in \cite{COLONNA.IUMJ}, where it is
also found  that the constant $C$ is  less or equal to $ 4/\pi$  for such type of mappings. A variant of this  corollary is obtain in \cite{ZHU.JLMS}   (see
also Theorem 1 there for analytic  functions of several complex variables). A variant is also given in \cite{MARKOVIC.CMB}.

\section{Characterisations of Bloch and normal mappings}

Based  on our main lemma  one may  derive Proposition \ref{PR.PAVLOVIC}. Indeed, if we take $D = \mathbb{B}^m$,  $\omega (x) =  1/( 1-|x|^2) $,
$x\in \mathbb{B}^m$, then $d_\omega$ is the  hyperbolic distance on  the  unit ball which will be denoted by $d_h$.  It is  well known that
\begin{equation*}
d_h(x,y)  =  \mathrm {asinh}\, \frac{|x - y |}{\sqrt {1-|x|^2}\sqrt {1-|y^2}} \quad x,\, y\in \mathbb{B}^m.
\end{equation*}
On the other hand, take $\tilde{D} = \mathbb {R}^n$ and $\tilde {\omega} \equiv 1$. Then $d_{\tilde {\omega}}$ is the Euclidean  distance.

The function  ${\Omega}(x,y) = \sqrt {1-|x|^2} \sqrt {1-|y|^2}$  satisfies  the  inequality
\begin{equation*}
d_h (x,y)   {\Omega}(x,y)\le |x - y|,\quad x,\,  y\in\mathbb{B}^m,
\end{equation*}
and therefore it is admissible   for any  $f \in \mathcal {C}^1(\mathbb{B}^m, \mathbb{R}^n)$ with the growth estimate
\begin{equation*}
(1-|x|^2) \|D_f (x)\| \le  C,\quad x\in \mathbb{B}^m
\end{equation*}
for a  constant $C$.

Indeed, using the inequality $\mathrm {asinh}\, t\le t$ for $t\ge 0$ (to prove it, let $\phi(t) = \mathrm {asinh}\,  t  -  t $; then we have $\phi (0) = 0$
and  $\phi'(t) = \frac{1}{\sqrt{1+t^2} }-1<0$, $t>0$, so the inequality  follows  from $\phi(t)\le \phi (0) = 0$)   one deduces:
\begin{equation*}\begin{split}
\frac { |x - y|}{d_h(x,y) }
& = {|x-y |} : { \mathrm {asinh}\, \frac{|x - y  |}{\sqrt {1-|x|^2}\sqrt {1-|y|^2}}}
\\&\ge {|x-y |} : {   \frac{|x - y  |}{\sqrt {1-|x|^2}\sqrt {1-|y|^2}}}
\\&=  \sqrt {1-|x|^2} \sqrt {1-|y|^2}  = {\Omega}(x,y),\quad x,\, y\in {\mathbb{B}^m},\, x\ne y.
\end{split}\end{equation*}
The Pavlovi\'{c} result in this case  now follows.  We  gave    a  similar  proof  in  \cite{MARKOVIC.CMB}.

Applying the following theorem  for normal analytic function on $\mathbb{U}$  we immediately obtain  the proposition stated  in the Introduction.

\begin{theorem}
A continuously   differentiable  mapping $f:\mathbb{B}^m\to \mathbb{R}^n$ satisfies the growth  condition
\begin{equation*}
\frac1 { 1-|x|^2} \|D _ f(x)\|\le  \frac {C}{ 1+|f(x)|^2},\quad   x\in  \mathbb{B}^m
\end{equation*}
for a constant $C$, if and only if there holds
\begin{equation*}
|f(x) -  f(y)| \le C  |x-y|  \frac{\sqrt{1+|f(x)| ^2} \sqrt{1+|f(y)|^2}}{\sqrt{1-|x|^2} \sqrt{1-|y|^2}}\quad x,\, y \in \mathbb{B}^m.
\end{equation*}
\end{theorem}

\begin{proof}
In our main lemma let us take  for the domain  $D$ the unit   ball $\mathbb{B}^m$, and for the domain  $\tilde{D}$ the space $\mathbb{R}^n$.   Let moreover
$\omega (x) =  1/( 1-|x|^2)$, $x\in \mathbb{B}^m$  and $\tilde{\omega}(y)=1/(1+|y|^2)$, $y\in \mathbb{R}^n$. As we have already said, $d_{\omega}$   is the
hyperbolic  distance $d_h$ on  the unit ball $\mathbb{B}^m$. The distance $d_{\tilde{\omega}}$ is the spherical distance      on  $\mathbb{R}^n$ which will
be denoted by $d_s$.  For  the spherical   distance  we have
\begin{equation*}
d_s(x,y)  =    \frac{|x - y |}{\sqrt {1+|x|^2}\sqrt {1+|y^2}},\quad x,\, y\in \mathbb{R}^m.
\end{equation*}

Now, we will  prove that
\begin{equation*}
\Omega_f (x,y) = \frac{\sqrt{1-|x|^2} \sqrt{1-|y|^2}} {\sqrt{1+|f(x)|^2} \sqrt{1+|f(y)|^2}},\quad x,\, y\in {\mathbb{B}^m}
\end{equation*}
is an admissible  function for $f$  with  respect to  the hyperbolic and spherical  weights.

First  note that
\begin{equation*}
\Omega_f(x,x)  = \frac{ 1-|x|^2 }{ 1+|f(x)|^2 } =  \frac{ 1 }{ 1+|f(x)|^2 } : \frac{ 1 }{ 1-|x|^2 }, \quad  x\in \mathbb{B}^m.
\end{equation*}

Since $\Omega_f(x,y)$ is symmetric and continuous, it remains only   to prove that $\Omega_f(x,y)$ satisfies  the inequality
\begin{equation*}
{\Omega}_f(x,y) \frac{|f(x)-f(y)|}{|x-y|} \le \frac {d_s (f(x),f(y))}{d_h (x,y)},\quad    x,\, y\in {\mathbb{B}^m},\, x\ne y.
\end{equation*}
Having in mind  the inequality $\mathrm {asinh}\le t$,  $t\ge  0$,  we obtain
\begin{equation*}\begin{split}
\frac{d_s(f(x),f(y) )}{d_h(x,y)}
& =  { \frac{|f(x)-f(y)|}{\sqrt{1+|f(x)|^2} \sqrt{1+|f(y)|^2}}}
: {\mathrm{asinh}  \frac{|x-y|}{\sqrt{1-|x|^2} \sqrt{1-|y|^2}}}
\\&\ge { \frac{|f(x)-f(y)|}{\sqrt{1+|f(x)|^2} \sqrt{1+|f(y)|^2}}}
 : {\frac{|x-y|}{\sqrt{1-|x|^2} \sqrt{1-|y|^2}}}
\\&= \frac{\sqrt{1-|x|^2} \sqrt{1-|y|^2}}{\sqrt{1+|f(x)|^2} \sqrt{1+|f(y)|^2}}
 \frac{|f(x)-f(y)|}{|x-y|} \\&= \Omega_f(x,y) \frac{|f(x)-f(y)|}{|x-y|},
\end{split}\end{equation*}
which   we aimed to prove.
\end{proof}

\end{document}